\documentclass[11pt]{article}
\usepackage[colorlinks=false]{hyperref}
\usepackage{amsfonts, amsmath, amssymb, amsthm, gensymb, CJK, soul}
\usepackage[nolabel]{showlabels}
\usepackage[utf8]{inputenc}



\oddsidemargin 0pt
\evensidemargin 0pt
\marginparwidth 40pt
\marginparsep 10pt
\topmargin -20pt
\headsep 10pt
\textheight 8.7in
\textwidth 6.65in
\linespread{1.2}

\author{Zilin Jiang \begin{CJK}{UTF8}{gkai}姜子麟\end{CJK}\thanks{Department of Mathematics, Massachusetts Institute of Technology, Cambridge, MA 02139, USA. Email: {\tt
zilinj@mit.edu}. The work was done when the author was a postdoctoral fellow at Technion -- Israel Institute of Technology, and was supported in part by ISF grant nos 1162/15, 936/16.}}
\title{On spectral radii of unraveled balls}
\date{}

\newtheorem{theorem}{Theorem}
\newtheorem{corollary}[theorem]{Corollary}
\newtheorem{lemma}[theorem]{Lemma}

\theoremstyle{definition}
\newtheorem{definition}{Definition}

\theoremstyle{remark}
\newtheorem{remark}{Remark}

\newcommand{\from}{\colon}
\newcommand{\abs}[1]{\left\lvert {#1}\right\rvert}
\newcommand{\ip}[2]{\langle#1, #2\rangle}
\newcommand{\dset}[2]{\left\{#1 : #2\right\}}
\newcommand{\sset}[1]{\left\{#1\right\}}

\newcommand{\la}{\lambda}
\newcommand{\si}{\sigma}
\renewcommand{\epsilon}{\varepsilon}

\newcommand{\R}{\mathbb{R}}
\newcommand{\N}{\mathbb{N}}

\newcommand{\E}[1]{\mathrm{E}\left[{#1}\right]}
\newcommand{\pr}[1]{\operatorname{Pr}\left({#1}\right)}
\newcommand{\cpr}[2]{\operatorname{Pr}\left(#1\mid#2\right)}
\newcommand{\tr}{\operatorname{tr}}

\newcommand{\tg}{\tilde{G}}

\begin{document}

\maketitle

\begin{abstract}
  Given a graph $G$, the unraveled ball of radius $r$ centered at a vertex $v$ is the ball of radius $r$ centered at $v$ in the universal cover of $G$. We prove a lower bound on the maximum spectral radius of unraveled balls of fixed radius, and we show, among other things, that if the average degree of $G$ after deleting any ball of radius $r$ is at least $d$ then its second largest eigenvalue is at least $2\sqrt{d-1}\cos(\frac{\pi}{r+1})$.
\end{abstract}

\section{Introduction}\label{intro}

The well-known result of Alon and Boppana~\cite{MR1124768} states that for every $d$-regular graph $G$ containing two edges at distance $\ge 2r$, the second largest eigenvalue, denoted by $\la_2(G)$, of the adjacency matrix of $G$ satisfies:
\[
  \la_2(G) \ge 2\left(1-\frac{1}{r}\right)\sqrt{d-1}+\frac{1}{r}.
\]
Subsequently, Friedman~\cite[Corollary 3.6]{MR1208809} improved the above bound (see also \cite{MR2056091}): for every $d$-regular graph $G$ with diameter $\ge 2r$, $$\la_2(G) \ge 2\left(1-\frac{\pi^2}{2r^2}+O\left(r^{-4}\right)\right)\sqrt{d-1}.$$

All these proofs of the Alon--Boppana bound primarily relied on estimating the spectral radius of an induced subgraph on the vertices within certain distance from a given vertex or edge.

\begin{definition}
  Given a graph $G$ and a vertex $v$, a ball of radius $r$ centered at $v$, denoted by $G(v, r)$, is the induced subgraph of $G$ on the vertices within distance $r$ from $v$.
\end{definition}

The proof of Friedman uses the fact~\cite[Lemma 3.3]{MR1208809} that the spectral radius of $G(v, r)$ is at least the spectral radius of the $d$-regular tree of depth $r$. Note that the universal cover of a $d$-regular graph is the infinite $d$-regular tree. This motivates the following definition.

\begin{definition}
  Given a graph $G$, a walk $(v_0, v_1, \dots)$ on $G$ is non-backtracking if $v_i \neq v_{i+2}$ for all $i$. For every vertex $v$, define the tree $\tg(v, r)$ as follows: the vertex set consists of all the non-backtracking walks on $G$ of length $\le r$ starting at $v$, and two vertices are adjacent if one is a simple extension of another. In other words, $\tg(v, r)$ is the ball of radius $r$ centered at $v$ in the universal cover $\tg$ of $G$.
\end{definition}

We call $\tg(v, r)$ the \emph{unraveled ball} of radius $r$ centered at $v$, and we prove the following theorem on the spectral radii of unraveled balls. From now on, $\la_1(\cdot)$ denotes the spectral radius of a graph and $d(u)$ denotes the degree of $u$ in $G$.

\begin{theorem}\label{lb1}
  For any graph $G = (V, E)$ of minimum degree $\ge 2$ and $r\in \N$, there exists a vertex $v\in V$ such that \[
    \la_1(\tg(v, r)) \ge \frac{1}{\abs{E}} \sum_{u\in V}d(u)\sqrt{d(u)-1} \cdot \cos\left(\frac{\pi}{r+2}\right).
  \]
\end{theorem}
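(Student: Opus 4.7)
The plan is, for each vertex $v$, to lower-bound $\la_1(\tg(v,r))$ by a Rayleigh quotient $\langle Af_v, f_v\rangle/\|f_v\|^2$ with a carefully chosen test function, then derive the theorem by averaging these bounds with weights proportional to $d(v)$. For a non-backtracking walk $w=(v_0,\dots,v_i)$ starting at $v_0=v$, I will take
\[
  f_v(w) \;=\; g_i \sqrt{p_v(w)},
\]
where $p_v$ is the simple non-backtracking random walk measure from $v$ (first step uniform among the $d(v)$ neighbors, then uniform among the $d(\cdot)-1$ admissible continuations) and $(g_i)_{i=0}^r$ is a scalar sequence to be optimized at the end. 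Normalizing by $\sqrt{p_v}$ is the decisive choice: since $p_v$ is a probability measure on walks of each length, it forces $\|f_v\|^2 = \sum_{i=0}^r g_i^2$, killing the $v$-dependence of the denominator.

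The next step is to compute $\langle Af_v, f_v\rangle$ by pairing each level-$i$ vertex of $\tg(v,r)$ with each of its children. After simplifying the product $\sqrt{p_v(\cdot)\,p_v(\cdot)}$ across a parent--child edge, the only factor that survives at each interior level is $\sqrt{d(v_i)-1}$, so that
\[
  \langle Af_v, f_v\rangle \;=\; 2 g_0 g_1 \sqrt{d(v)} \;+\; 2\sum_{i=1}^{r-1} g_i g_{i+1}\, \mathbb{E}_v\!\left[\sqrt{d(v_i)-1}\right],
\]
with $\mathbb{E}_v$ the expectation under $p_v$. At this point I would invoke the key identity
\[
  \sum_v d(v)\, \mathbb{E}_v[\phi(v_i)] \;=\; \sum_u d(u)\phi(u) \qquad \text{for every } i\ge 1,
\]
a direct consequence of the fact that the uniform distribution on oriented edges is stationary for the non-backtracking walk. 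Applying it with $\phi(u) = \sqrt{d(u)-1}$, and bounding the root term via $\sqrt{d(v)} \ge \sqrt{d(v)-1}$ (which uses $d(v) \ge 2$), gives
\[
  \sum_v d(v)\,\langle Af_v, f_v\rangle \;\ge\; 2\Bigl(\sum_u d(u)\sqrt{d(u)-1}\Bigr)\sum_{i=0}^{r-1} g_i g_{i+1},
\]
while $\sum_v d(v)\,\|f_v\|^2 = 2\abs{E}\sum_i g_i^2$.

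Since the maximum is at least any weighted average, dividing yields
\[
  \max_v \la_1(\tg(v,r)) \;\ge\; \frac{1}{\abs{E}}\Bigl(\sum_u d(u)\sqrt{d(u)-1}\Bigr) \cdot \frac{\sum_{i=0}^{r-1} g_i g_{i+1}}{\sum_{i=0}^r g_i^2},
\]
and the proof concludes by choosing $g_i = \sin\bigl((i+1)\pi/(r+2)\bigr)$, the Perron eigenvector of the path on $r+1$ vertices, which renders the last ratio equal to $\cos(\pi/(r+2))$. The main obstacle is the design of $f_v$ itself: one must simultaneously eliminate the $v$-dependence of $\|f_v\|^2$, reduce the quadratic form of $A$, after averaging, to that of the path on $r+1$ vertices, and absorb the exceptional root transition (where the root has $d(v)$ rather than $d(v)-1$ children). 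The choice $f_v \propto \sqrt{p_v}$ resolves the first two issues and makes the averaging identity apply off the shelf, while the minimum-degree hypothesis provides exactly the slack needed for the third.
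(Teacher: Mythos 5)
Your proof is correct and takes essentially the same route as the paper: a test function of the form (path eigenvector) $\times$ $\sqrt{\text{non-backtracking walk probability}}$, the stationarity of the uniform distribution on directed edges, and the Rayleigh quotient of the path on $r+1$ vertices giving the factor $\cos(\pi/(r+2))$. The only organizational difference is that the paper builds one forest indexed by directed edges (so every level, including the first, contributes a uniform $\sqrt{d-1}$ factor), whereas you average per-vertex Rayleigh quotients with degree weights and absorb the root's exceptional $\sqrt{d(v)}$ via $\sqrt{d(v)}\ge\sqrt{d(v)-1}$; both are valid.
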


After presenting the proof of Theorem~\ref{lb1} in Section~\ref{sec_proofs}, we show in Section~\ref{sec_cover} a cheap lower bound on the spectral radius of the universal cover of a graph. We proceed in Section~\ref{sec_balls} and \ref{sec_second} to describe additional applications including an improvement to a result of Hoory~\cite{MR2102266}. The final section briefly discusses a potential extension to weighted graphs and its connection to the normalized Laplacian.

\section{Proof of Theorem~\ref{lb1}}\label{sec_proofs}

  The proof uses the old idea of constructing a test function by looking at non-backtracking walks (see e.g. \cite{MR3558041} and \cite{MR3775873}). The innovation here is to weight the test function using the eigenvector of a path.

\begin{proof}[Proof of Theorem~\ref{lb1}]
  Define $W_i$, for every $i \ge 1$, to be the set of all non-backtracking walks of length $i$ on $G$. Specifically $W_1$ is the set of directed edges of $G$. Define the forest $T$ as follows: the vertex set is $\bigcup_{i=1}^{r+1} W_i$ and two vertices are adjacent if and only if one is a simple extension of the other. For every $e = (v_0, v_1) \in W_1$, denote by $T_{e}$ the connected component of $T$ containing $e$. If one identifies every vertex $(v_0, v_1, \dots, v_i)$ in $T_e$, where $i \in [r+1]$, with the vertex $(v_1, \dots, v_i)$ in $\tg(v_1, r)$, then $T_e$ becomes a subgraph of $\tg(v_1, r)$. By the monotonicity of spectral radius, $\la_1(\tg(v_1, r)) \ge \la_1(T_e)$. Because $\la_1(T) = \max\dset{\la_1(T_{e})}{e\in W_1}$, there exists a vertex $v\in V$ such that $\la_1(\tg(v,r)) \ge \la_1(T)$. Set $\la = 2\cos(\tfrac{\pi}{r+2})$. It suffices to prove $$\la_1(T) \ge \la\cdot\sum_{u\in V}\frac{d(u)}{\abs{W_1}}\sqrt{d(u)-1}.$$
  
  Consider the following time-homogeneous Markov chain on $W_1$: the initial state $E_1$ is chosen uniformly at random from $W_1$, and given the current state $E_i = (v_{i-1}, v_i)$, the next state $E_{i+1}$ is chosen uniformly at random among $\dset{(v_i, v_{i+1})\in W_1}{v_{i+1}\neq v_{i-1}}$. Since the ending vertex of $E_i$ is always identical to the starting vertex of $E_{i+1}$ and the starting vertex of $E_i$ is always distinct from the ending vertex of $E_{i+1}$, we can join $E_1, E_2, \dots, E_i$ together to form a non-backtracking walk on $G$ of length $i$, which we denote by the random variables $Y_i = (X_0, X_1, \dots, X_i)$.
  
  Recall that $\la = 2\cos(\frac{\pi}{r+2})$ is the spectral radius of the path of length $r$. Let $(x_1, x_2, \dots, x_{r+1}) \in \R^{r+1}$ be an eigenvector of the path associated with $\la$. By the Rayleigh principle, we have
  \begin{equation} \label{p1eq1}
    \sum_{i=2}^{r+1}{2x_{i-1}x_i} = \la \cdot \sum_{i=1}^{r+1} x_i^2
  \end{equation}
  Define the vector $f\from \bigcup_{i=1}^{r+1}W_i \to \R$ by $f(w) = x_i\sqrt{\pr{Y_{i} = w}}$ for $w \in W_i$, and define the matrix $A$ to be the adjacency matrix of the forest $T$. For $w = (v_0, v_1, \dots, v_i)$, denote by $w^- = (v_0, v_1, \dots, v_{i-1})$. We observe that
  \begin{align}
    \ip{f}{f} & = \sum_{i=1}^{r+1}\sum_{w\in W_i}f(w)^2 = \sum_{i=1}^{r+1}\sum_{w\in W_i}x_i^2\pr{Y_i = w} = \sum_{i=1}^{r+1}x_i^2, \label{p1eq2}\\
    \ip{f}{Af} & = \sum_{i=2}^{r+1}\sum_{w\in W_i}2f(w^-)f(w) = \sum_{i=2}^{r+1}2x_{i-1}x_i\sum_{w\in W_i}\sqrt{\pr{Y_{i-1}=w^-}\pr{Y_i=w}}. \label{p1eq3}
  \end{align}
  
  By the Markov property, for every $i \ge 2$ and $w = (v_0, v_1, \dots, v_i) \in W_i$,
  $$\frac{\pr{Y_{i}=w}}{\pr{Y_{i-1}=w^-}} = \cpr{E_i = (v_{i-1}, v_i)}{E_{i-1} = (v_{i-2}, v_{i-1})} = \frac{1}{d(v_{i-1})-1}.$$
  Thus the inner summation in the right hand side of \eqref{p1eq3} equals
  \begin{equation}\label{p1eq4}
    \sum_{w = (v_0, v_1, \dots, v_i)\in W_i}\sqrt{d(v_{i-1})-1}\pr{Y_i=w} = \E{\sqrt{d(X_{i-1})-1}} = \sum_{v\in V}\sqrt{d(v)-1}\pr{X_{i-1}=v}.    
  \end{equation}
  Since the minimum degree of $G$ is $\ge 2$, the Markov chain has no absorbing states. Moreover, one can easily check that the uniform distribution on $W_1$ is a stationary distribution of the Markov chain, that is, $\pr{E_i = e} = 1/\abs{W_1}$ for all $i \ge 1$ and $e\in W_1$. Thus $\pr{X_{i-1} = v} = d(v)/\abs{W_1}$ for all $i \ge 2$ and $v\in V$. Plugging this into \eqref{p1eq4}, we can simplify \eqref{p1eq3} to \[
    \ip{f}{Af} = \sum_{i=2}^{r+1} 2x_{i-1}x_i\sum_{v\in V(G)}\frac{d(v)}{\abs{W_1}}\sqrt{d(v)-1}.
  \]

  Finally we combine with \eqref{p1eq1} and \eqref{p1eq2}, and the Rayleigh principle $\la_1(T) \ge {\ip{f}{Af}}/{\ip{f}{f}}$.
\end{proof}

\section{Spectral radius of the universal cover}\label{sec_cover}

Since $\tilde{G}(v, r)$ is an induced subgraph of $\tilde{G}$, the monotonicity of spectral radius implies immediately a lower bound on $\la_1(\tilde{G})$ by letting $r$ go to infinity in Theorem~\ref{lb1}.

\begin{corollary}
  For any graph $G = (V, E)$ of minimum degree $\ge 2$, the spectral radius of its universal cover satisfies 
  \begin{equation}\label{lb2}
    \la_1(\tg) \ge \frac{1}{\abs{E}}\sum_{u\in V}d(u)\sqrt{d(u)-1}.
  \end{equation}
\end{corollary}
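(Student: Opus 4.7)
The plan is to deduce the corollary directly from Theorem~\ref{lb1} by sending $r \to \infty$, as suggested by the paragraph preceding the statement. First I would fix any $r \in \N$ and apply Theorem~\ref{lb1} to obtain a vertex $v_r \in V$ with
\[
  \la_1(\tg(v_r, r)) \ge \frac{1}{\abs{E}}\sum_{u\in V}d(u)\sqrt{d(u)-1}\cdot\cos\left(\frac{\pi}{r+2}\right).
\]
By definition, $\tg(v_r, r)$ is a (finite) induced subgraph of the universal cover $\tg$, so the monotonicity of the spectral radius under taking induced subgraphs yields $\la_1(\tg) \ge \la_1(\tg(v_r, r))$. Chaining the two inequalities and then letting $r \to \infty$, the factor $\cos(\pi/(r+2))$ tends to $1$, producing \eqref{lb2}.

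The only subtle point is interpreting $\la_1(\tg)$ for the infinite graph $\tg$. I would adopt the standard definition $\la_1(\tg) := \sup \{\la_1(H) : H \text{ a finite induced subgraph of } \tg\}$ (equivalently the norm of the adjacency operator on $\ell^2$ when $G$ has bounded degree), under which the monotonicity step is tautological. With this in place, no further estimates are required: the corollary follows from two lines.

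There is no real obstacle here; the only thing to double check is that the vertex $v_r$ supplied by Theorem~\ref{lb1} is allowed to depend on $r$. This causes no issue, since each individual inequality $\la_1(\tg) \ge \la_1(\tg(v_r, r))$ holds regardless of the choice, and it is the right-hand side $\cos(\pi/(r+2)) \cdot \abs{E}^{-1} \sum_u d(u)\sqrt{d(u)-1}$ that we drive to its limit.
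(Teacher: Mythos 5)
Your argument is exactly the paper's: apply Theorem~\ref{lb1} for each $r$, use that $\tg(v,r)$ is an induced subgraph of $\tg$ together with monotonicity of the spectral radius, and let $r\to\infty$ so that $\cos(\pi/(r+2))\to 1$. The proposal is correct, and your remark on how to interpret $\la_1(\tg)$ for the infinite graph is a reasonable clarification of a point the paper leaves implicit.
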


\begin{remark}
By the inequality of arithmetic and geometric means, the right hand side of \eqref{lb2} satisfies
\[
  \frac{1}{\abs{E}}\sum_{u\in V}d(u)\sqrt{d(u)-1} = 2 \cdot \frac{\sum_{u\in V}d(u)\sqrt{d(u)-1}}{\sum_{u\in V}d(u)} \ge 2 \prod_{u\in V}\left(\sqrt{d(u)-1}\right)^{\frac{d(u)}{\sum_{v\in V}d(v)}},
\] which recovers the lower bound on $\la_1(\tg)$ in \cite[Theorem 1]{MR2102266}.  
\end{remark}

\section{Maximum spectral radius of balls}\label{sec_balls}

The following result, which is essentially due to Mohar~\cite[Theorem 2.2]{MR2679612}, connects the spectral radii of a ball and its corresponding unraveled ball.

\begin{lemma}\label{unravel}
  For every vertex $v$ of a graph $G$ and $r\in\N$, $\la_1(G(v, r)) \ge \la_1(\tg(v, r))$.
\end{lemma}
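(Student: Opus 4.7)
The plan is to push down a Perron--Frobenius eigenvector of $\tg(v,r)$ to $G(v,r)$ along the covering map $\pi\from \tg \to G$, which sends each non-backtracking walk starting at $v$ to its endpoint and restricts to a map of vertex sets $V(\tg(v,r)) \to V(G(v,r))$.

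More precisely, let $\mu = \la_1(\tg(v,r))$ and fix a nonnegative eigenvector $\tilde{f}$ for $\mu$ (available by Perron--Frobenius, since $\tg(v,r)$ is a connected tree). Define $f\from V(G(v,r)) \to \R_{\ge 0}$ by $f(u) = \sum_{\tilde{u} \in \pi^{-1}(u)}\tilde{f}(\tilde{u})$, where the sum is over preimages that lie inside $\tg(v,r)$. The key step is to establish the componentwise bound $Af \ge \mu f$, where $A$ is the adjacency matrix of $G(v,r)$. Assuming this bound, taking $\ip{f}{\cdot}$ of both sides yields $\ip{f}{Af} \ge \mu\ip{f}{f}$; since $\tilde{f}$ is strictly positive on the connected tree $\tg(v,r)$ the vector $f$ is nonzero, and the Rayleigh principle delivers $\la_1(G(v,r)) \ge \mu$.

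To verify $Af\ge \mu f$ pointwise, I would expand both sides as double sums:
\[
  \mu f(u) = \sum_{\tilde{u}\in\pi^{-1}(u)}\sum_{\tilde{u'}\sim \tilde{u}}\tilde{f}(\tilde{u'}) \quad \text{and}\quad (Af)(u) = \sum_{u'\sim u}\sum_{\tilde{u'}\in\pi^{-1}(u')}\tilde{f}(\tilde{u'}),
\]
where the adjacencies on the left and right are taken in $\tg(v,r)$ and $G(v,r)$, respectively. Each pair $(\tilde{u},\tilde{u'})$ appearing on the left is sent to the pair $(\pi(\tilde{u'}),\tilde{u'})$ on the right, and for fixed $u$ this assignment is injective: because $\pi\from \tg\to G$ is a covering, its restriction to the $\tg$-neighbors of any $\tilde{u'}$ is a bijection onto the $G$-neighbors of $\pi(\tilde{u'})$, so $\tilde{u'}$ has at most one $\tg(v,r)$-neighbor projecting to a given $u$. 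This injection directly yields the desired inequality.

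The main subtlety is the boundary of the unraveled ball: a vertex at distance exactly $r$ from the root has some of its $\tg$-neighbors truncated, yet the images of those missing neighbors still lie inside $G(v,r)$ and contribute to $(Af)(u)$. Rather than causing trouble, this mismatch is precisely what allows for inequality (instead of equality) in the direction we need. Once this observation is in place, the rest of the argument is a straightforward application of the Rayleigh principle.
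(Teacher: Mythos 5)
Your proof is correct, but it takes a genuinely different route from the paper's. The paper argues via closed walks: it maps each closed walk of length $k$ in $\tg(v,r)$ based at the root to a closed walk in $G(v,r)$ based at $v$, observes that this map is injective, and then invokes the characterization $\la_1(\cdot) = \lim\sqrt[2k]{s_{2k}(\cdot)}$ of the spectral radius by counts of closed walks (Lemma~\ref{closed_walks}, proved in the appendix). You instead push the Perron eigenvector of $\tg(v,r)$ down along the fibers of the endpoint map $\pi$ and verify the componentwise bound $Af\ge\mu f$. The crucial ingredient is the same in both arguments --- the restriction of $\pi$ to the neighbors of any vertex of $\tg$ is injective (indeed bijective) onto the $G$-neighbors of its image --- which you use to match terms of the two double sums and which underlies the unique lifting of walks in the paper's version. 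Your argument buys a few things: it bypasses Lemma~\ref{closed_walks} and the appendix entirely, it handles the truncation at the boundary of the unraveled ball transparently (the unmatched terms on the right-hand side only help the inequality), and it generalizes verbatim to nonnegatively weighted graphs, which is relevant to the extension contemplated in Section~\ref{sec_remarks}; the paper's version is purely combinatorial and avoids Perron--Frobenius. Two small points worth making explicit in a final write-up: adjacency in $\tg(v,r)$ projects to adjacency in $G(v,r)$ because the two endpoints are consecutive vertices of a walk from $v$ of length at most $r+1$, hence both lie within distance $r$ of $v$ and the relevant edge survives in the induced subgraph; and $f\neq 0$ because $f(v)\ge\tilde f((v))>0$ by strict positivity of the Perron vector on the connected tree $\tg(v,r)$.
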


To prove Lemma~\ref{unravel} we need the following simple fact. For the sake of completeness we include the short proof in Appendix~\ref{appendix}.

\begin{lemma}\label{closed_walks}
  For every connected graph $G = (V, E)$ and every vertex $v \in V$, $\la_1(G) = \limsup \sqrt[k]{s_k(v)}$, where $s_k(v)$ is the number of closed walks of length $k$ starting at $v$ in $G$. In fact, $\la_1(G) = \lim \sqrt[2k]{s_{2k}(v)}$.
\end{lemma}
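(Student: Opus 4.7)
The plan is to interpret $s_k(v)$ spectrally via the adjacency matrix $A$ of $G$: since $s_k(v)$ counts closed walks of length $k$ starting at $v$, we have $s_k(v) = (A^k)_{vv}$. Picking an orthonormal eigenbasis $\phi_1, \dots, \phi_n$ of $A$ with eigenvalues $\la_1 \ge \la_2 \ge \dots \ge \la_n$ (so $|\la_i| \le \la_1$ for all $i$), I would write
\[
  s_k(v) = (A^k)_{vv} = \sum_{i=1}^{n} \la_i^k\, \phi_i(v)^2.
\]
This identity is the engine for both halves of the proof.

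For the upper bound, I would use $s_k(v) \ge 0$ and that $(A^k)_{ww} \ge 0$ for every $w$ to conclude $s_k(v) \le \tr(A^k) = \sum_i \la_i^k \le n\la_1^k$, and therefore $\sqrt[k]{s_k(v)} \le n^{1/k}\la_1 \to \la_1$, which bounds the $\limsup$. For the lower bound, I would restrict to even length $k = 2m$ so that every summand $\la_i^{2m}\phi_i(v)^2$ in the spectral expansion is nonnegative. Applying Perron--Frobenius to the connected graph $G$, the top eigenvector $\phi_1$ can be taken strictly positive on $V$, so $\phi_1(v) > 0$, and dropping all but the $i = 1$ term yields $s_{2m}(v) \ge \la_1^{2m}\phi_1(v)^2$. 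Taking $(2m)$-th roots and letting $m \to \infty$ gives $\liminf_m \sqrt[2m]{s_{2m}(v)} \ge \la_1$, which combined with the upper bound establishes $\lim_m \sqrt[2m]{s_{2m}(v)} = \la_1$, and consequently $\limsup_k \sqrt[k]{s_k(v)} = \la_1$.

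The main subtlety, and the reason the refined statement is phrased over the even subsequence, is that for bipartite $G$ every closed walk has even length, so $s_k(v) = 0$ for all odd $k$ and $\sqrt[k]{s_k(v)}$ fails to converge along the full sequence — this is precisely why $\limsup$ rather than $\lim$ appears in the general statement and why one must pass to $k = 2m$ for the existence of the limit. Everything else is routine: Perron--Frobenius for a connected graph delivers the strict positivity $\phi_1(v) > 0$ needed to extract a nontrivial lower bound on the diagonal entry, and the trace-based upper bound requires no connectedness at all.
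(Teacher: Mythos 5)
Your proof is correct, but it takes a genuinely different route from the paper's. You expand the single diagonal entry $s_k(v) = (A^k)_{vv} = \sum_i \la_i^k \phi_i(v)^2$ in an orthonormal eigenbasis and invoke Perron--Frobenius on the connected graph to get $\phi_1(v) > 0$, which yields the lower bound $s_{2m}(v) \ge \la_1^{2m}\phi_1(v)^2$ directly at the vertex $v$. The paper never touches the eigenvectors: it first establishes $\la_1(G) = \limsup \sqrt[k]{s_k}$ for the \emph{total} walk count $s_k = \tr A^k = \sum_i s_k(v_i)$, and then transfers this to the single vertex $v$ by a purely combinatorial walk surgery --- prepending a geodesic from $v$ to $v_i$ and appending its reverse gives $s_k(v_i) \le s_{k+2k_i}(v)$, whence $s_k(v) \le s_k \le n\, s_{k+2k^*}(v)$ with $k^*$ the eccentricity of $v$. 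The trade-off is that your argument is shorter and conceptually cleaner but leans on the strict positivity of the Perron eigenvector (this is exactly where connectivity enters for you), while the paper's argument avoids Perron--Frobenius altogether, using connectivity only to guarantee that the geodesics exist, at the cost of some index bookkeeping. Your closing observation about bipartite graphs correctly identifies why the general statement uses $\limsup$ while the refined one passes to even $k$. One minor point worth making explicit in your write-up: the inequality $\abs{\la_i} \le \la_1$ that you use in the trace upper bound is itself a (standard) consequence of Perron--Frobenius, or alternatively of the nonnegativity of the even moments, so it deserves a word of justification rather than a parenthetical.
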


\begin{proof}[Proof of Lemma~\ref{unravel}]
  Recall that a vertex of $\tg(v, r)$ is a non-backtracking walk of length $\le r$ starting at $v$. Denote the non-backtracking walk of length $0$ starting at $v$ by $w := (v)$. For every $k$, we naturally map a closed walk $w = w_1, w_2, \dots, w_k = w$ of length $k$ in $\tg(v,r)$ to a closed walk $v = v_1, v_2, \dots v_k = v$ of length $k$ in $G(v,r)$, where $v_j$ is the terminal vertex of $w_j$ for $j\in[k]$. One can show that this map is injective, and so the number of closed walks of length $k$ starting at $v$ in $G(v,r)$ is at least the number of closed walks of length $k$ starting at $w$ in $\tg(v,r)$. Lemma~\ref{closed_walks} thus implies that $\la_1(G(v,r)) \ge \la_1(\tg(v,r))$.
\end{proof}

We shall combine Lemma~\ref{unravel} and Theorem~\ref{lb1} to provide a lower bound on the maximum spectral radius of balls in Lemma~\ref{lb3}, which slightly strengthens \cite[Lemma 12]{jp17}. We need the following fact.

\begin{theorem}[Theorem 2 of Collatz and Sinogowitz~\cite{MR0087952}; Theorem 3 of Lov\'asz and Pelik\'an~\cite{MR0416964}]\label{lp}
  If $G$ is a tree of $n$ vertices then $\la_1(P_n) \le \la_1(G)$, where $P_n$ is the path with $n$ vertices.
\end{theorem}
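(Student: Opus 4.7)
The plan is to prove by induction on $n$ that $P_n$ uniquely minimizes $\la_1$ among trees on $n$ vertices. The base cases $n \le 3$ are immediate, since each of these vertex counts admits only one tree up to isomorphism. For the inductive step, I would identify a sliding operation that transforms any tree $T \ne P_n$ into a new tree $T'$ on $n$ vertices with $\la_1(T') < \la_1(T)$ and strictly fewer leaves; iterating this operation ultimately yields $P_n$.

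If $T \ne P_n$, then $T$ contains a branch vertex of degree $\ge 3$. Choosing a branch vertex $v$ at maximum distance from a fixed leaf, at least two of the subtrees hanging off $v$ must be pendant paths (else a deeper branch vertex would exist). Let these pendant paths have lengths $a \ge b \ge 1$. I define $T'$ from $T$ by detaching the length-$b$ pendant path from $v$ and grafting it onto the far end of the length-$a$ pendant path, merging them into a single pendant path of length $a + b$ attached at $v$; thus $T'$ has one fewer leaf than $T$.

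The crux is the sliding lemma: $\la_1(T') < \la_1(T)$. I would prove it via the pendant-leaf recursion $\phi_H(\la) = \la\, \phi_{H - u}(\la) - \phi_{H - u - w}(\la)$ for a leaf $u$ with unique neighbor $w$ in $H$. Iterating this along the pendant paths of $T$, and separately along the merged pendant path of $T'$, expresses both $\phi_T$ and $\phi_{T'}$ as combinations, with coefficients $U_\ell(\la/2)$ (the Chebyshev polynomials of the second kind coming from $\phi_{P_\ell}$), of $\phi_R$ and $\phi_{R - v}$, where $R$ is the residual tree obtained by shaving both pendant paths off $T$ while retaining $v$. Using the product formula $U_a U_b = \sum_{k=0}^{\min(a,b)} U_{a+b-2k}$ together with $U_{a+b} \cdot U_0 = U_{a+b}$, the difference $\phi_{T'}(\la) - \phi_T(\la)$ factors into an explicit product of Chebyshev-like terms and a single characteristic polynomial of the residual tree, whose sign at $\la = \la_1(T)$ can be analyzed.

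The main obstacle is this sign analysis. The argument needs two ingredients: first, strict positivity of $U_\ell(\la/2)$ and of the relevant Chebyshev combinations for $\la$ in the interval $(0, \Delta)$ where $\Delta$ is the maximum degree of $T$, which follows from the product identity and the location of the zeros of $U_\ell$; and second, that $\phi_R(\la_1(T))$ has a definite sign, which holds because $\la_1(R) < \la_1(T)$ by strict monotonicity of the spectral radius under restoring the pendant paths to form $T$. Combining these yields $\phi_{T'}(\la_1(T)) > 0$, which together with $\phi_{T'}(\la) \to +\infty$ as $\la \to \infty$ implies $\la_1(T') < \la_1(T)$, closing the induction.
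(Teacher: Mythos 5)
First, a point of reference: the paper does not prove Theorem~\ref{lp} at all --- it is imported as a black box from Collatz--Sinogowitz and Lov\'asz--Pelik\'an (whose own argument runs through matching polynomials, showing the path minimizes every matching number among trees on $n$ vertices). So your route is necessarily different, and its architecture is a legitimate classical one: it is the Li--Feng path-sliding transformation. A non-path tree has a branch vertex $v$ carrying at least two pendant paths; merging them reduces the number of leaves by one; and everything reduces to showing the merge strictly decreases $\la_1$. (Minor quibble: what you are really inducting on is the number of leaves, not $n$.)

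The genuine gap is in the sign analysis, which is exactly the step you leave to ``can be analyzed.'' Write $p_k = \phi_{P_k}$, so $p_k(\la)=U_k(\la/2)$ and $p_{k+1}=\la p_k - p_{k-1}$, and let $R$ be the residual tree containing $v$. The pendant-path expansion gives $\phi_T = p_ap_b\,\phi_R - (p_{a-1}p_b+p_ap_{b-1})\,\phi_{R-v}$ and $\phi_{T'} = p_{a+b}\,\phi_R - p_{a+b-1}\,\phi_{R-v}$, and hence
\begin{equation*}
  \phi_{T'}-\phi_T \;=\; \Bigl(\textstyle\sum_{j=0}^{b-1}p_{a-b+2j}\Bigr)\bigl(\la\,\phi_{R-v}-\phi_R\bigr).
\end{equation*}
Your first ingredient --- positivity of $U_\ell(\la/2)$ on $(0,\Delta)$ --- is false as stated: all roots of $U_\ell(\la/2)$ lie in $(-2,2)\subset(-\Delta,\Delta)$, so $p_\ell$ changes sign inside that interval for $\ell\ge 1$. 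What is true, and what you need, is $p_m(\la)>0$ for $\la\ge\la_1(T)$, because each $P_m$ occurring is a proper subgraph of the connected tree $T$ and so $\la_1(P_m)<\la_1(T)$. More seriously, your second ingredient (a definite sign for $\phi_R(\la_1(T))$) does not determine the sign of the second factor: for $\la\ge\la_1(T)$ both $\phi_R$ and $\phi_{R-v}$ are positive and they enter with opposite signs. The missing idea is the vertex-deletion expansion for trees, $\phi_R=\la\,\phi_{R-v}-\sum_{u\sim v}\phi_{R-v-u}$, which converts the second factor into $\sum_{u\sim v}\phi_{R-v-u}$; this is strictly positive for $\la\ge\la_1(T)$ since each $R-v-u$ is a proper subgraph and $v$ retains at least one neighbour in $R$ (as $\deg_T(v)\ge 3$). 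Finally, $\phi_{T'}(\la_1(T))>0$ together with $\phi_{T'}\to+\infty$ does not by itself exclude roots of $\phi_{T'}$ above $\la_1(T)$; run the same sign argument on the whole ray $[\la_1(T),\infty)$, where $\phi_T\ge 0$, to get $\phi_{T'}>0$ there and conclude $\la_1(T')<\la_1(T)$. With these repairs your proof is complete.
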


\begin{lemma}\label{lb3}
  For any graph $G = (V, E)$ of average degree $d \ge 1$ and $r\in \N$, there exists $v\in V$ such that $\la_1(G(v, r)) \ge 2\sqrt{d-1}\cos(\tfrac{\pi}{r+2})$.
\end{lemma}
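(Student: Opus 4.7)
The plan is to chain Theorem~\ref{lb1}, Lemma~\ref{unravel}, and Jensen's inequality, after first arranging that the graph has minimum degree at least $2$. Concretely, I would pass to the $2$-core $H$ of $G$, obtained by iteratively deleting vertices of degree $\le 1$. A one-line calculation shows that if the current graph has average degree at least $2$, then removing a vertex of degree $\le 1$ does not decrease the average degree; hence, assuming $d \ge 2$, the core $H$ is non-empty with average degree $d_H \ge d$. Since $H$ is an induced subgraph of $G$, each ball $H(v, r)$ sits inside $G(v, r)$ as a subgraph, so $\la_1(G(v, r)) \ge \la_1(H(v, r))$ by monotonicity.

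Next, I would apply Theorem~\ref{lb1} to $H$ to obtain a vertex $v \in V(H)$ with
\[
\la_1(\tilde{H}(v, r)) \ge \frac{\cos(\pi/(r+2))}{\abs{E(H)}}\sum_{u \in V(H)} d_H(u)\sqrt{d_H(u)-1},
\]
where $\tilde{H}(v, r)$ denotes the unraveled ball inside $H$, and then use Lemma~\ref{unravel} to replace $\la_1(\tilde{H}(v, r))$ on the left by $\la_1(H(v, r))$. It remains to show
\[
\frac{1}{\abs{E(H)}}\sum_{u \in V(H)} d_H(u)\sqrt{d_H(u)-1} \ge 2\sqrt{d-1}.
\]
This follows from Jensen's inequality: the function $f(x) = x\sqrt{x-1}$ has $f''(x) = (3x-4)/(4(x-1)^{3/2})$, which is positive on $[2, \infty)$, so $f$ is convex on the range of degrees of $H$. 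Jensen therefore yields $\frac{1}{\abs{V(H)}}\sum_u f(d_H(u)) \ge f(d_H)$, and dividing by $d_H/2 = \abs{E(H)}/\abs{V(H)}$ gives the required bound $2\sqrt{d_H-1} \ge 2\sqrt{d-1}$.

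The remaining case $1 \le d < 2$, where the $2$-core may be empty and $G$ degenerates to a forest, would be handled separately: in this regime the target bound is at most $2\cos(\pi/(r+2)) = \la_1(P_{r+1})$, so producing a vertex $v$ whose ball $G(v, r)$ contains at least $r+1$ vertices lets one invoke Theorem~\ref{lp} on a spanning tree of $G(v, r)$ to conclude. The main obstacle, in my view, is this convexity issue: because $x\sqrt{x-1}$ is not convex on all of $[1, \infty)$ (its second derivative changes sign at $x = 4/3$), Jensen cannot be applied directly to $G$, and the $2$-core reduction is genuinely necessary rather than cosmetic.
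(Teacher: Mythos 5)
Your proposal is correct and follows essentially the same route as the paper: your $2$-core reduction is exactly the paper's iterative leaf-deletion (which cannot decrease the average degree when $d \ge 2$), followed by Theorem~\ref{lb1}, Lemma~\ref{unravel}, monotonicity, and Jensen's inequality applied to the convex function $x \mapsto x\sqrt{x-1}$ on $[2,\infty)$. The only point your sketch of the case $1 \le d < 2$ glosses over is the sub-case where the connected (tree) component has fewer than $r+1$ vertices, so that no ball $G(v,r)$ contains $r+1$ vertices; there $G(v,r) = G$ and one concludes instead via $\la_1(G) \ge d = (d-1)+1 \ge 2\sqrt{d-1}$, as the paper does.
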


\begin{proof}
  If $G$ has more than one connected component, we shall just prove for one of the connected components with average degree $\ge d$. Hereafter, we assume that $G$ is connected.

  \textbf{Case $1 \le d < 2$:} For a connected graph, having an average degree $< 2$ is the same as being a tree. Pick any $v\in V$. If $G(v, r) = G$, then $\la_1(G(v, r)) = \la_1(G) \ge d = (d - 1) + 1 \ge 2\sqrt{d-1}$. Otherwise $G(v, r)$ is a tree of $\ge r+1$ vertices, and $\la_1(G(v, r)) \ge \la_1(P_{r+1}) = 2\cos(\tfrac{\pi}{r+2})$ by Theorem~\ref{lp}.
  
  \textbf{Case $d \ge 2$:} Since removing leaf vertices from a graph of average degree $d \ge 2$ cannot decrease its average degree, without loss of generality, we may assume that the minimum degree of $G$ is $\ge 2$. By Lemma~\ref{unravel} and Theorem~\ref{lb1}, there exists a vertex $v\in V$ such that $$\la_1(G(v, r)) \ge \la_1(\tg(v,r)) \ge \frac{1}{\abs{E}}\sum_{u\in V}d(u)\sqrt{d(u)-1}\cdot \cos\left(\frac{\pi}{r+2}\right).$$ A straightforward calculation can verify that the function $x\mapsto x\sqrt{x-1}$ is convex for $x\ge 2$. It follows from the Jensen's inequality that the right hand side of the above is at least 
  \[
    \frac{1}{\abs{E}} \cdot \abs{V} d\sqrt{d-1}\cdot \cos\left(\frac{\pi}{r+2}\right) = 2\sqrt{d-1}\cos\left(\frac{\pi}{r+2}\right). \qedhere    
  \]
\end{proof}

\section{Second largest eigenvalue}\label{sec_second}

It is natural to generalize the Alon--Boppana bound to graphs that may not be regular. It is conceivable that for any sequence of graphs $G_i$ with average degree $\ge d$ and growing diameter, $\liminf \la_2(G_i) \ge 2\sqrt{d-1}$. However, Hoory constructed in \cite{MR2102266} a counterexample to such a statement. In his construction, the average degree drops drastically after deleting a ball of radius $1$. Hoory then extended the Alon--Boppana bound to graphs that have a robust average degree.

\begin{definition}
  A graph has an $r$-robust average degree $\ge d$ if the average degree of the graph is $\ge d$ after deleting any ball of radius $r$.
\end{definition}

\begin{theorem}[Theorem 3 of Hoory \cite{MR2102266}]\label{hoory}
  Given a real number $d \ge 2$ and a natural number $r \ge 2$, for any graph $G$ that has an $r$-robust average degree $\ge d$, its second largest eigenvalue in absolute value satisfies:
  \begin{equation} \label{hoory_la2}
    \max\sset{\la_2(G), \la_{-1}(G)} \ge 2\left(1 - c \cdot \frac{\log r}{r}\right)\sqrt{d-1},
  \end{equation}
  where $\la_{-1}(G)$ denotes the smallest eigenvalue of $G$, and $c$ is an absolute constant.
\end{theorem}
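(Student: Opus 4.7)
The plan is to construct two induced subgraphs of $G$ that are vertex-disjoint and mutually non-adjacent, each of spectral radius at least $2\sqrt{d-1}\cos(\pi/(r+1))$, and then to invoke Cauchy interlacing on their disjoint union. First I pick an arbitrary $v_0 \in V(G)$ and let $G_0$ denote the subgraph obtained by deleting from $G$ the vertex set of $G(v_0, r)$. By the $r$-robust average degree hypothesis, $G_0$ has average degree at least $d \ge 2$, so Lemma~\ref{lb3} applied to $G_0$ with radius $s := r - 1$ produces a vertex $v_1 \in V(G_0)$ with $\la_1(G_0(v_1, s)) \ge 2\sqrt{d-1}\cos(\pi/(r+1))$.

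Next I re-anchor at $v_1$ and let $G_1$ be the subgraph obtained by deleting from $G$ the vertex set of $G(v_1, r)$. Robustness again gives average degree at least $d$, and a second application of Lemma~\ref{lb3} yields $v_2 \in V(G_1)$ with $\la_1(G_1(v_2, s)) \ge 2\sqrt{d-1}\cos(\pi/(r+1))$. Writing $H_1 := G_0(v_1, s)$ and $H_2 := G_1(v_2, s)$, regarded as induced subgraphs of $G$, any $u \in V(H_1)$ satisfies $d_G(u, v_1) \le s = r - 1$ (since paths in $G_0$ are paths in $G$), while any $u' \in V(H_2)$ satisfies $d_G(u', v_1) \ge r + 1$ (since $u' \notin V(G(v_1, r))$). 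The triangle inequality forces $d_G(u, u') \ge (r+1)-(r-1) = 2$, so $V(H_1) \cap V(H_2) = \emptyset$ and $G$ contains no edge joining them. Hence $G[V(H_1) \cup V(H_2)] = H_1 \sqcup H_2$.

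It follows that $\la_2(H_1 \sqcup H_2) = \min\sset{\la_1(H_1), \la_1(H_2)} \ge 2\sqrt{d-1}\cos(\pi/(r+1))$, and Cauchy interlacing yields $\la_2(G) \ge 2\sqrt{d-1}\cos(\pi/(r+1))$. The Taylor estimate $\cos(\pi/(r+1)) \ge 1 - \pi^2/(2(r+1)^2)$ comfortably absorbs the $1 - c\log r / r$ factor in Theorem~\ref{hoory}, delivering the claim in quantitatively sharper form (and bounding $\la_2(G)$ alone rather than the maximum with $\la_{-1}(G)$). The subtle step I expect to be the chief obstacle is the calibration of $s$: the triangle-inequality separation forces $s \le r - 1$, while any smaller $s$ would degrade the $\cos(\pi/(s+2))$ factor, so $s = r - 1$ is the unique choice producing both a clean disjoint-union decomposition and the tight $\cos(\pi/(r+1))$ bound.
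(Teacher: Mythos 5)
Your argument is correct and is essentially the paper's own proof of the stronger Theorem~\ref{ab_robust}: the paper cites Theorem~\ref{hoory} from Hoory without proof, but proves Theorem~\ref{ab_robust} (which implies it) by exactly your two-ball construction --- delete a ball of radius $r$, use Lemma~\ref{lb3} to find a ball of radius $r-1$ with spectral radius at least $2\sqrt{d-1}\cos(\pi/(r+1))$, repeat around the new center, and note that the two small balls span no edge of $G$. The only cosmetic differences are that you invoke Cauchy interlacing on the induced disjoint union where the paper writes out the equivalent Rayleigh-quotient computation with a test vector orthogonal to the Perron eigenvector, and that your closing claim that the bound ``comfortably absorbs'' Hoory's $1 - c\log r/r$ factor is only guaranteed asymptotically, since for small $r$ the comparison depends on the unspecified absolute constant $c$.
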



It is noticeable that Theorem~\ref{hoory} may not be optimal in comparison to the Alon--Boppana bound. The left hand side of \eqref{hoory_la2} should be simply $\la_2(G)$, and inside the right hand side $c\cdot \frac{\log r}{r}$ could be improved to $c \cdot \frac{1}{r^2}$. We prove that this is indeed the case.

\begin{theorem}\label{ab_robust}
  Given a real number $d \ge 1$ and a natural number $r \ge 1$, if a graph $G$ has an $r$-robust average degree $\ge d$, then
  \[
    \la_2(G) \ge 2\sqrt{d-1}\cos\left(\frac{\pi}{r+1}\right).
  \]
\end{theorem}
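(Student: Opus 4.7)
My plan follows the familiar two-ball separation strategy for $\la_2$ lower bounds. I will exhibit two vertex-disjoint induced subgraphs $G[V_1]$ and $G[V_2]$ of $G$, with no $G$-edge between $V_1$ and $V_2$, each having spectral radius at least $\mu := 2\sqrt{d-1}\cos(\pi/(r+1))$. Extending non-negative Perron eigenvectors $f_1, f_2$ of $G[V_1]$ and $G[V_2]$ by zero to $V(G)$ produces orthogonal vectors with $\ip{f_i}{A_G f_j} = 0$ for $i \ne j$ (since a cross-term would require an edge between $V_1$ and $V_2$), so every nonzero vector in their two-dimensional span has Rayleigh quotient at least $\mu$ in $G$. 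The min-max characterization of $\la_2$ then yields $\la_2(G) \ge \mu$.

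To produce $V_1$ and $V_2$, I will apply Lemma~\ref{lb3} twice with parameter $r-1$, noting that its conclusion specializes to exactly $\mu$. A first application to $G$ itself (whose average degree is $\ge d$ by taking the empty ball in the $r$-robust hypothesis) supplies a vertex $v_1$ with $\la_1(G(v_1, r-1)) \ge \mu$, and I set $V_1 := V(G(v_1, r-1))$. Next, form the induced subgraph $G'$ obtained by deleting from $G$ the entire ball $G(v_1, r)$ of radius $r$ around $v_1$. The $r$-robust hypothesis guarantees that $G'$ still has average degree $\ge d$, so a second application of Lemma~\ref{lb3} supplies $v_2 \in V(G')$ with $\la_1(G'(v_2, r-1)) \ge \mu$; set $V_2 := V(G'(v_2, r-1))$.

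The required separation reduces to a short distance check. Every $w \in V_2$ lies in $V(G')$, hence $d_G(v_1, w) \ge r+1$, while every vertex of $V_1$ is at $G$-distance at most $r-1$ from $v_1$; this simultaneously forces $V_1 \cap V_2 = \emptyset$ and forbids any $G$-edge between $V_1$ and $V_2$. Moreover, since $V_2 \subseteq V(G')$, the induced subgraphs $G[V_2]$ and $G'[V_2] = G'(v_2, r-1)$ have identical edge sets, so $\la_1(G[V_2]) \ge \mu$ as needed. The only real obstacle is the bookkeeping of the three radii: the hypothesis permits deleting a ball of radius $r$, Lemma~\ref{lb3} matches the target constant $\cos(\pi/(r+1))$ only when applied with radius $r-1$, and the identity $(r-1)+1 = r$ is exactly what lets the second ball of radius $r-1$ escape the deleted region and stay vertex- and edge-disjoint from the first. (In the degenerate case $r=1$ one has $\mu=0$, and $\la_2(G) \ge 0$ follows at once by testing on two disjoint singleton supports.)
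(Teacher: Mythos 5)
Your proposal is essentially the paper's own proof: the same two applications of Lemma~\ref{lb3} with radius $r-1$, the same deletion of the radius-$r$ ball around $v_1$, the same distance check showing no $G$-edge joins the two balls, and the same conclusion via the span of the two zero-extended Perron vectors (the paper picks an explicit combination $c_1f_1+c_2f_2$ orthogonal to the top eigenvector of $A$ rather than quoting min-max, which amounts to the same thing). The one small slip is your justification that $G$ itself has average degree $\ge d$ ``by taking the empty ball'': a ball of radius $r$ is always centered at a vertex, so the hypothesis says nothing directly about $G$; the paper instead applies Lemma~\ref{lb3} to $G$ minus an arbitrary ball of radius $r$ and then transfers the bound to $G(v_1,r-1)$ by monotonicity of the spectral radius, a one-line repair you should adopt.
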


\begin{proof}
  After deleting an arbitrary ball of radius $r$, as the average degree is $\ge d$, by Lemma~\ref{lb3} and the monotonicity of spectral radius, there is $v_1\in V$ such that the spectral radius of $G_1 := G(v_1, r-1))$ is at least $2\sqrt{d-1}\cos(\tfrac{\pi}{r+1}) =: \la_*$. Let $G' = (V', E')$ be the graph after deleting the ball of radius $r$ centered at $v_1$ from $G$. Repeating this argument, we can find $v_2 \in V'$ such that the spectra radius of $G_2 := G'(v_2, r-1)$ is at least $\la_*$. For $i=1,2$, let $A_i$ be the adjacency matrix of $G_i$ and let $f_i$ be the eigenvector of $A_i$ associated with $\la_1(G_i)$.
  
  Denote the adjacency matrix of $G$ by $A$. Choose scalars $c_1, c_2$, not all zero, such that the vector $f\from V\to \R$, defined by
  \[
    f(u) = \begin{cases}
      c_1f_1(u) & \text{if } u\in V(G_1);\\
      c_2f_2(u) & \text{if } u\in V(G_2);\\
      0      & \text{otherwise},
    \end{cases}
  \] is perpendicular to an eigenvector of $A$ associated with $\la_1(G)$. If $u\in V(G_1)$, that is $u$ is within distance $r-1$ from $v_1$, and $v$ is adjacent to $u$, then $v$ is within distance $r$ from $v_1$, hence $v \notin V(G_2)$. In other words, $\sset{u,v}\notin E$ for all $u\in V(G_1)$ and $v\in V(G_2)$. Thus we obtain a Rayleigh quotient from which $\la_2(G)\ge\la_*$ follows: \[
    \frac{\ip{f}{Af}}{\ip{f}{f}} = \frac{c_1^2\ip{f_1}{A_1f_1} + c_2^2\ip{f_2}{A_2f_2}}{c_1^2\ip{f_1}{f_1}+c_2^2\ip{f_2}{f_2}} = \frac{c_1^2\la_1(G_1)\ip{f_1}{f_1} + c_2^2\la_1(G_2)\ip{f_1}{f_1}}{c_1^2\ip{f_1}{f_1}+c_2^2\ip{f_2}{f_2}} \ge \la_*. \qedhere
  \]
\end{proof}

\section{Concluding remarks} \label{sec_remarks}

The \emph{normalized Laplacian} $N$ of $G = (V, E)$ is defined by $N = I - D^{-\frac{1}{2}}AD^{-\frac{1}{2}}$, where $D$ is the diagonal degree matrix with $D_{v,v} = d(v)$ for all $v\in V$ and $A$ is the adjacency matrix. The second smallest eigenvalue of $N$, denoted by $\mu_2(G)$, is tightly connected to various expansion properties of $G$ (see, for example, \cite[Section 2]{MR3558041}).

In the context of the normalized Laplacian, the Alon--Boppana bound says that for a $d$-regular graph $G$ with diameter $\ge 2r$, $\mu_2(G) \le 1 - \frac{2\sqrt{d-1}}{d}\left(1-\frac{\pi^2}{2r^2}+O(r^{-4})\right)$. Young~\cite[Section 3]{y11} refuted the natural generalization by showing an infinite family of graphs $G_1, G_2, \dots$ with common average degree $d$ and growing diameter and some fixed $\epsilon > 0$ such that $\mu_2(G_i) \ge 1 - \frac{2\sqrt{d-1}}{d} + \epsilon$ for all $i$. He also proved an upper bound of the form $\mu_2(G) \le 1-\frac{2\sqrt{d-1}}{\tilde{d}}\left(1-c\cdot\frac{\ln k}{k}\right)$, where $d$ is the average degree, $\tilde{d}$ is the second order average degree and $k$ is the normalized Laplacian eigenradius (see \cite[Theorem 6]{y11}). Recently Chung proved, under some technical assumptions
on $G$, another upper bound of the form $\mu_2(G)\le 1 - \si(G)(1-\frac{c}{k})$, where $\si(G) := {2\sum_{u\in V}d(u)\sqrt{d(u)-1}}/{\sum_{u\in V}d(u)^2}$ and $k$ is the diameter of $G$ (see \cite[Theorem~9]{MR3558041}).

Observe that the matrix $D^{-\frac{1}{2}}AD^{-\frac{1}{2}}$ in the definition of $N$ can be seen as a graph $G$ with the weight $d(u)^{-\frac{1}{2}}d(v)^{-\frac{1}{2}}$ assigned to each edge $\sset{u,v}$. Moreover, the second largest eigenvalue of this weighted graph is equal to $1 - \mu_2(G)$. Based on these two observations, the author believes that the machinery developed in this paper can be generalized to weighted graph to provide a better upper bound on $\mu_2(G)$, possibly under fewer assumptions on the graph.

\section*{Acknowledgements}

Some ideas of this paper have their origins in the recent joint work with Polyanskii on equiangular lines in Euclidean spaces (see \cite[Lemma 12]{jp17}). Besides, the author thanks the referees for many improvements in presentation.

\bibliographystyle{jalpha}
\bibliography{local_spectral_radius}

\appendix

\section{Spectral radius and closed walks}\label{appendix}

\begin{proof}[Proof of Lemma~\ref{closed_walks}]
  Let $v = v_1, v_2, \dots, v_n$ be the vertices of $G$, and let $\la_1(G) = \la_1 \ge \la_2 \ge \dots \ge \la_n$ be the eigenvalues of the adjacency matrix $A$ of $G$. An elementary graph theoretic interpretation identifies the trace of $A^k$ as the number of closed walks of length $k$ in $G$. But a standard matrix result equates $\tr A^k$ to the $k$th moment of $A$ defined as $\sum_{i=1}^n \la_i^k$. Thus, we have found the following:
  \begin{equation}\label{moment}
    \sum_{i=1}^n \la_i^k = \tr A^k = \sum_{i=1}^n s_k(v_i) =: s_k.
  \end{equation}
  Observe that $s_k$ is always a natural number. We see from \eqref{moment} that $\la_1 \ge \abs{\la_i(G)}$ for all $i\in[n]$, hence 
  \begin{equation}\label{limsup}
    \la_1(G) = \limsup \sqrt[k]{s_k}.
  \end{equation}
  For every $i\in[n]$, let $k_i$ be the distance from $v$ to $v_i$. By prepending the walk from $v$ to $v_i$ of length $k_i$ and appending the reverse, we extend a closed walk of length $k$ starting at $v_i$ to one of length $k+2k_i$ starting at $v$, and we obtain that $s_{k}(v_i) \le s_{k+2k_i}(v)$. Similarly by appending a closed walk of length $2$ starting at $v$, we extend a closed walk of length $k$ starting at $v$ to one of length $k+2$, and we obtain that $s_k(v) \le s_{k+2}(v)$ for all $k\in\N$. Thus $$s_k(v) \le s_k = \sum_{i=1}^n s_k(v_i) \le \sum_{i=1}^n s_{k+2k_i}(v) \le n \cdot s_{k+2k^*}(v),$$ where $k^* = \max\sset{k_1, k_2, \dots, k_n}$. In view of \eqref{limsup}, we get that $\la_1(G) = \limsup\sqrt[k]{s_k(v)}$. Lastly, note that \eqref{limsup} can be made more precise as $\la_1(G) = \lim \sqrt[2k]{s_{2k}}$ to obtain $\la_1(G) = \lim \sqrt[2k]{s_{2k}(v)}$.
\end{proof}

\end{document}